\newtheorem{theorem}{Theorem}
\newtheorem{lemma}{Lemma}[section]
\newtheorem{proposition}[lemma]{Proposition}
\newtheorem*{definition}{Definition}
\title[]{Regular Structures in Kronecker Permutations}
\author[]{François Clément}
\address{Department of Mathematics, University of Washington, Seattle}
\email{fclement@uw.edu} 
\begin{document}
\begin{abstract}
     Kronecker sequences $(k \alpha \mod 1)_{k=1}^{\infty}$ for some irrational $\alpha > 0$ have played an important role in many areas of mathematics. It is possible to associate to each finite segment $(k \alpha \mod 1)_{k=1}^{n}$ a permutation $\pi \in S_n$ associated with the canonical lifting to two dimensions. We show that these permutations induced by Kronecker sequences based on irrational $\alpha$ are extremely regular for specific choices of $n$ and $\alpha$. In particular, all quadratic irrationals have an infinite number of choices of $n$ that lead to permutations where no cycle has length more than 4. 
\end{abstract}
\maketitle

\section{Introduction and Results}
\subsection{Introduction}
Let $\alpha > 0$ be irrational. The associated Kronecker sequence, also known as irrational rotation on the torus, is an infinite sequence defined by 
$$ x_k \equiv k \alpha \mod 1.$$
We also use the fractional part notation $x_k = \left\{k \alpha \right\}$, where 
$\left\{x \right\} = x - \left\lfloor x \right\rfloor$.

\begin{figure}
    \centering
    \includegraphics[width=0.25\linewidth]{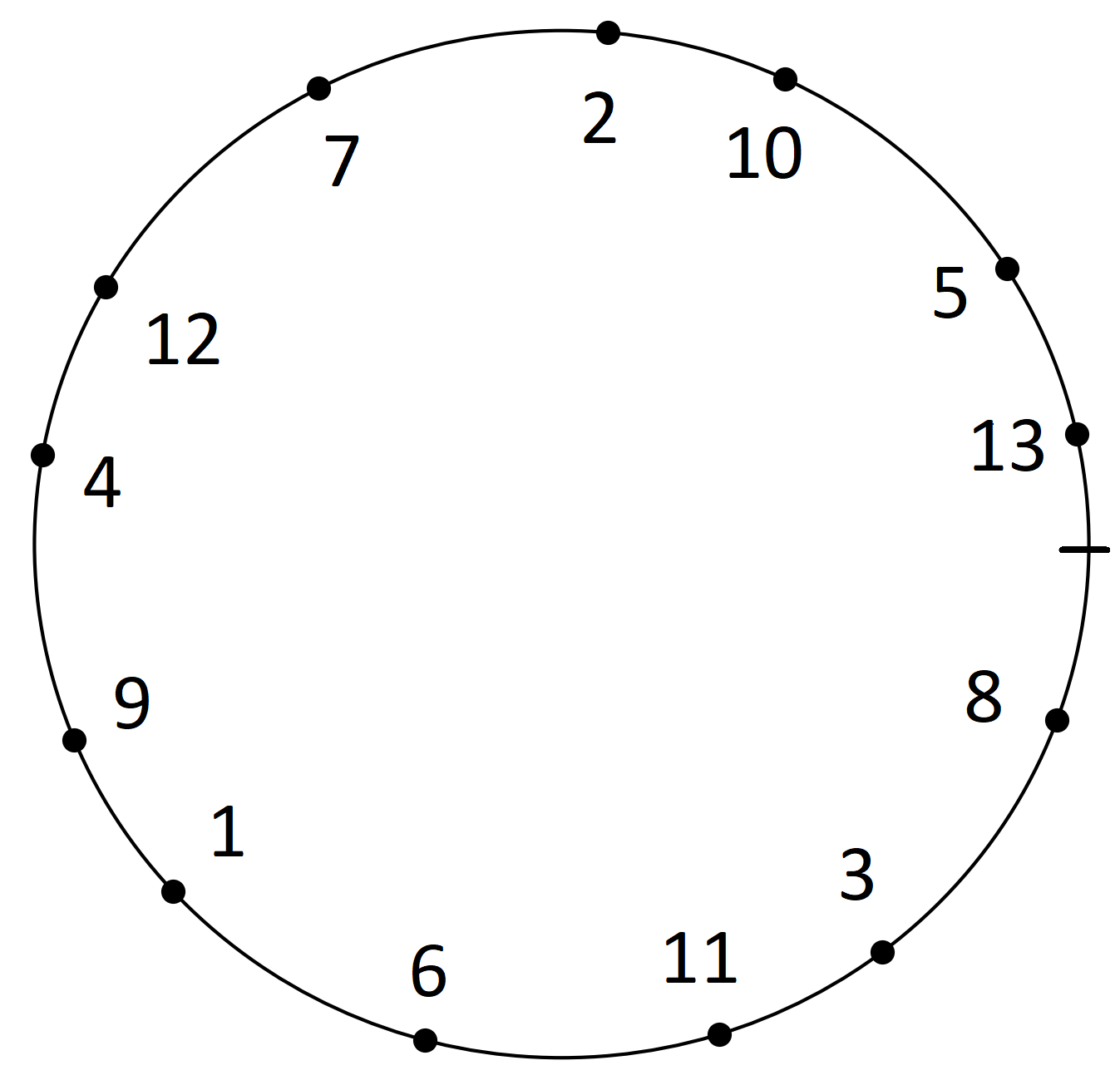}
    \caption{The first 13 points of the Kronecker sequence $\left\{ \varphi k \right\}$ on the torus, where $\varphi$ is the golden ratio}
    \label{fig:Kro}
\end{figure}

Its structure on the torus is very well-understood, in particular thanks to the very well-known three-gap theorem, or Steinhaus problem, shown initially by Sós~\cite{Sos} and with alternative proofs and generalizations in~\cite{ Fried, Haynes2014, HaynesMarklof,Marklof2017,Slater_1967, vanRavenstein_1988}. It states that the gap between consecutive points (for example 1 and 6 in Figure~\ref{fig:Kro}), can only take 3 specific values.
The regularity of the Kronecker sequence as a sequence on $\mathbb{S}^1$ can be turned into a very regular set on $[0,1]^2$ by considering the set $(k/n, \left\{ \varphi k \right\})_{k=1}^{n}$. This specific set has become increasingly central in a number of problems \cite{BilykTY12,CDKP, PNAS, Hinrichs2014, Nagel},  and there are now indications that it may exhibit a type of `universality' in the sense that it shows up naturally as the minimal energy configuration for a large number of different energies (see for example~\cite{Hinrichs2014}).

\subsection{Fibonacci Permutations.} 
We were motivated by the following simple example. Given a set like $(k/13, \left\{ \varphi k \right\})_{k=1}^{13}$ in Figure 2 and considering its pattern, there is a natural permutation associated to a Kronecker sequence. For a any finite segment $(k \alpha \mod 1)_{k=1}^{n}$, we may order these $n$ real numbers in $[0,1]$ by size and define a permutation 
$\sigma:\left\{1,2,\dots, n\right\} \rightarrow \left\{1,2,\dots,n\right\}$ so that
$$ x_{\sigma(1)} < x_{\sigma(2)} < \dots < x_{\sigma(n)}.$$

If $\alpha$ is irrational, then the $n$ real numbers are distinct. Returning to the initial example illustrated in two dimensions in Figure~\ref{fig:2}, we see that the 13-th element is the smallest and the 5-th element is the second smallest while the 8-th element is the largest, leading to the permutation
$$\sigma = \left(
 \begin{array}{ccccccccccccc}
1 & 2 & 3 & 4 & 5 & 6 & 7 & 8 & 9 & 10 & 11 & 12 & 13 \\
13 & 5 & 10 & 2 & 7 & 12 & 4 & 9 & 1 & 6 & 11 & 3 & 8
\end{array}
\right).
$$

\begin{figure}[h]
    \centering
    \includegraphics[width=0.35\linewidth]{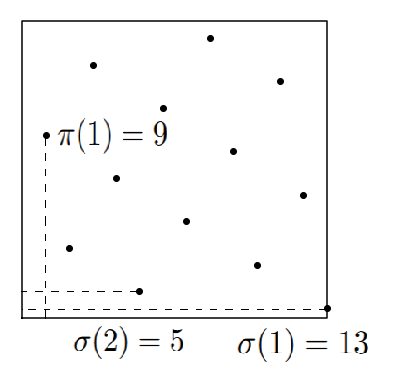}
    \caption{Taking Figure 1, we see that the closest element to 0 is the thirteenth. In two dimensions, this corresponds to looking at the smallest $y$-coordinate: we get $\sigma(1)=13$. Then $\sigma(2)=8$, and one can work their way up the $y$-coordinates of the points to get the permutation. Doing similarly but with the $x$-axis gives the inverse permutation $\pi$. $x_1$ is the ninth greatest element, therefore $\pi(1)=9$.}
    \label{fig:2}
\end{figure}
A natural way to decompose permutations is to decompose them into cycles. Here, the decomposition takes the form
$$ \sigma = (7~ 4~ 2~ 5) (10~ 6~ 12~ 3) (11) (13~ 8~ 9~ 1)$$
which is a decomposition into three 4-cycles and one fixed point. We believe these permutations to be quite natural and will refer to them as Kronecker Permutations. A formal definition is as follows. 
\begin{definition} Let $\alpha > 0$ be irrational and $n \in \mathbb{N}$. The associated Kronecker permutation $\sigma \in S_n$ has the property that $\pi(i) \alpha \mod 1$ is the $i-$th smallest number among the $n$ numbers $k\alpha \mod 1$ for $1 \leq k \leq n$.
\end{definition}

In practice, $\pi:=\sigma^{-1}$ is easier to work with than $\sigma$ itself. Since our results describe cycle lengths as well as fixed points, we will work with $\pi$ as the results naturally carry over to $\sigma$. Results will be stated with $\sigma$ as it is the more natural object, but we will be working with $\pi$ in the proofs.
Given this, we refer to the Kronecker Permutation and its inverse permutation associated with $n$ points as $\sigma_n$ and $\pi_{n}$ respectively, shortened to $\sigma$ and $\pi$ whenever the number of points is clear.\\

Kronecker Permutations are highly structured for suitable choices of $n$ and $\alpha$.
As is frequently the case in the study of irrational rotations on the Torus, the case
of $\alpha = \varphi$ being the golden ratio is somewhat special. We can analyze the arising Kronecker permutations explicitly whenever $n$ is a Fibonacci number. In particular, the arising cycles in the cycle decomposition have uniformly bounded length.

\begin{theorem}\label{th:fibo} 
    Let $n\in \mathbb{N}_{> 0}$, let $F_n$ denote the $n-$th Fibonacci number, with $F_1=F_2=1$, and let $\sigma_{F_n}$ denote the permutation associated with $(k \varphi \mod 1)_{k=1}^{F_n}$.
    \begin{enumerate}
        \item If $n$ is even, $\sigma_{F_n}$ is comprised of 2-cycles and fixed points.
        \item If $n \in \left\{1,5\right\} \mod 6$ is comprised of cycles of length 4 and one fixed point.
    \item If $n \equiv 3 ~ \mod 6$, then $\sigma_{F_n}$ is comprised of cycles of length 4 and one cycle of length 2.
    \end{enumerate}
\end{theorem}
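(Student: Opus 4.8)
The plan is to show that, after reduction modulo $F_n$, each Kronecker permutation $\sigma_{F_n}$ is an \emph{affine} map $i \mapsto a i + b$ of $\mathbb{Z}/F_n\mathbb{Z}$ (on the representatives $\{1,\dots,F_n\}$), and then to read the cycle structure off from the multiplicative order of its multiplier $a$. I will work with the ``successor in sorted order'' map on indices, which is $\sigma_{F_n}$ composed with a shift. The basic input is the Binet-type identity $F_j\varphi \equiv (-1)^{j-1}\varphi^{-j}\pmod 1$, so that $\{F_j\varphi\}$ equals $\varphi^{-j}$ for odd $j$ and $1-\varphi^{-j}$ for even $j$. In particular the extreme point among $\{k\varphi\}_{k=1}^{F_n}$ (smallest when $n$ is odd, largest when $n$ is even) is $x_{F_n}$, and this pins down the additive constant: $\sigma_{F_n}(1)=F_n$ for odd $n$ gives $b\equiv -F_{n-2}$, while $\sigma_{F_n}(F_n)=F_n$ for even $n$ gives $b\equiv 0$.

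The heart of the argument is the claim that, for $n$ odd, the point immediately above $x_j$ in cyclic sorted order is $x_{T(j)}$ with $T(j)\equiv j+F_{n-2}\pmod{F_n}$ (reduced into $\{1,\dots,F_n\}$); the case $n$ even is symmetric with multiplier $F_{n-1}\equiv -F_{n-2}$. To prove this I first record that at $N=F_n$ points the gaps between consecutive points take only the two values $\varphi^{-(n-2)}$ and $\varphi^{-(n-1)}$ (the degenerate two-gap case of the three-gap theorem, since $F_n$ is a convergent denominator of $\varphi$), and that the minimal gap is $\varphi^{-(n-1)}$ by the best-approximation property of Fibonacci denominators. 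A direct computation with the Binet identity then shows that $x_{T(j)}-x_j \bmod 1$ equals $\varphi^{-(n-2)}$ when $j\le F_{n-1}$ (no wrap) and $\varphi^{-(n-1)}$ when $j> F_{n-1}$ (wrap), in both cases a positive quantity at most $\varphi^{-(n-2)}$. Since $2\varphi^{-(n-1)}>\varphi^{-(n-2)}$, a forward step of length at most $\varphi^{-(n-2)}$ cannot jump over a point, so $x_{T(j)}$ is exactly the successor of $x_j$. As $\gcd(F_{n-2},F_n)=1$ for odd $n$, the map $T$ is a single cycle through all positions, and iterating yields $\sigma_{F_n}(i)\equiv F_{n-2}(i-1)\pmod{F_n}$ (respectively $F_{n-1}\,i$ for $n$ even). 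Establishing this no-skip claim cleanly — hence that the index increment is \emph{exactly} constant modulo $F_n$ rather than merely approximately so — is the main obstacle.

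Granting the affine description, the cycle structure is governed by the order of $a$ modulo $F_n$. Cassini's identity $F_{n-1}F_{n+1}-F_n^2=(-1)^n$, reduced modulo $F_n$ via $F_{n+1}\equiv F_{n-1}$, gives the clean congruence $F_{n-1}^2\equiv(-1)^n\pmod{F_n}$, whence $a^2\equiv(-1)^n$. For $n$ even this is $a^2\equiv 1$; since moreover $b\equiv 0$, we have $\sigma_{F_n}(i)\equiv F_{n-1}\,i$ and $\sigma_{F_n}^2=\mathrm{id}$, so every cycle is a fixed point or a $2$-cycle, proving (1). For $n$ odd we instead get $a^2\equiv -1$, so $a$ has order exactly $4$ modulo $F_n$ (using $F_n>2$) and every nontrivial cycle has length dividing $4$.

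It remains to separate (2) from (3) by the parity of $F_n$, which is precisely what $n\bmod 6$ controls: $F_n$ is odd for $n\equiv 1,5\pmod 6$ and $F_n\equiv 2\pmod 4$ for $n\equiv 3\pmod 6$. When $F_n$ is odd, $\gcd(a-1,F_n)$ divides $\gcd(a^2-1,F_n)=\gcd(-2,F_n)=1$, so there is a unique fixed point; and since $a^2-1\equiv -2$ is invertible, $\sigma_{F_n}^2(x)=x$ forces $x$ to be that fixed point, so there are no genuine $2$-cycles and the remaining $F_n-1$ points split into $(F_n-1)/4$ four-cycles, giving (2). When $F_n=2M$ with $M$ odd, I will analyze $\sigma_{F_n}$ through CRT on $\mathbb{Z}/2\times\mathbb{Z}/M$: modulo $2$ the multiplier is $\equiv 1$ and, as $b=-F_{n-2}$ is odd, the induced map is the order-$2$ translation, while modulo $M$ the multiplier still has order $4$ with a unique fixed residue. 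Taking least common multiples of the two component periods, the two points lying over the fixed residue modulo $M$ form a single $2$-cycle and every other point has period $4$, leaving exactly one $2$-cycle and $(F_n-2)/4$ four-cycles, which is (3). The small cases $n\le 5$ are checked directly.
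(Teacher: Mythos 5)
Your proposal is correct, but it reaches the key affine description by a genuinely different route than the paper. The paper's engine is an exchange lemma (Lemma~\ref{lem:approx}): replacing $\varphi$ by the convergent $F_{n-1}/F_n$ moves each of the first $q_n-1$ points by at most $q_n\left|\varphi-p_n/q_n\right|\leq 1/(\sqrt{5}q_n)<1/(2q_n)$ by Hurwitz's bound (Lemma~\ref{lemma:Hurwitz}), i.e.\ by less than half the spacing of the rational orbit $\{j/q_n\}$, so the permutation is unchanged; the last point is patched by hand via the alternation of convergents. This yields $\pi_{F_n}(k)\equiv F_{n-1}k$ (resp.\ $F_{n-1}k+1$) $\bmod\ F_n$ at once, and the congruence $F_{n-1}^2\equiv(-1)^n \bmod F_n$ comes from the palindrome identity $p_n=q_{n-1}$ combined with $p_nq_{n-1}-p_{n-1}q_n=(-1)^n$ (which for $\varphi$ is exactly your Cassini step). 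You instead derive the affine structure dynamically and exactly: Binet step sizes for $\{F_j\varphi\}$, the minimal-gap lower bound $\varphi^{-(n-1)}$ from best approximation, and $2\varphi^{-(n-1)}>\varphi^{-(n-2)}$ give the successor map $T(j)\equiv j+F_{n-2}\bmod F_n$; note that the step you flag as ``the main obstacle'' is in fact already closed by the argument you give, since an arc containing an intermediate point is a union of at least two gaps, each at least the minimal gap, which exceeds your maximal step. The trade-off is scope versus elegance: the exchange lemma is not golden-ratio-specific---it works for any irrational at any convergent denominator, which is what powers Proposition~\ref{th:cycles} and Theorem~\ref{th:quad}---whereas your Binet computation is tied to $\varphi$. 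Conversely, your endgame is cleaner in two places: for $F_n$ odd you verify that the unique solution of $\sigma^2(i)=i$, namely $i_0\equiv(1-a)/2$, is a genuine fixed point (the paper only bounds the candidates outside $4$-cycles by two and then counts parities), and for $n\equiv 3\bmod 6$ your CRT splitting $\mathbb{Z}/F_n\cong\mathbb{Z}/2\times\mathbb{Z}/M$---an order-$2$ translation times an order-$4$ affine map with a unique fixed residue---replaces the paper's explicit Cassini verification that $(F_{n-1}+1)/2$ and $(F_{n-1}+F_n+1)/2$ swap. If you write this up, make explicit that $F_{n-2}$ and $b=-F_{n-2}$ are odd when $n\equiv 3\bmod 6$ (so the mod-$2$ component really is the nontrivial translation), and note that the degenerate case $F_3=2$, where $M=1$ and ``order exactly $4$'' fails, is covered only by your direct check of small $n$.
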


We believe Theorem~\ref{th:fibo} to be one way to understand the origin of the incredible regularity of the Fibonacci set. This set is particularly well-distributed for $F_n$ many points, and the permutation has a very precise structure. For a non-Fibonacci number of points $k$, the point set is less uniform, and the associated permutation has longer cycles. For example, for $k=36$ to $51$, there are never two cycles of identical length, varying from a single cycle of length 48 and a fixed point for $k=49$, to 4 cycles of lengths 4, 5, 14 and 26 plus a fixed point for $k=50$.

\subsection{General result.}
Theorem~\ref{th:fibo} relies primarily on the properties of the continued fraction expansion of $\varphi$. We can extend this result to other irrationals with a periodic continued fraction expansion which, in particular, include the roots of quadratic polynomials with integer coefficients.

\begin{theorem}\label{th:quad}
    For any quadratic irrational with continued fraction expansion $$ \alpha = [a_0;\overline{a_1,\ldots,a_k}],$$
    let $j \equiv k-1 \mod k$ and consider the approximate continued fraction expansion $p_j/q_j$. Then the cycle decomposition of the permutation $\sigma_{q_j}$ corresponding to $\left\{ \alpha k \mod 1 \right\}_{k=1}^{q_j}$ is composed of
     \begin{enumerate}
     \item only fixed points and 2-cycles or
     \item 4-cycles and at most two fixed points or a single 2-cycle.
 \end{enumerate}
\end{theorem}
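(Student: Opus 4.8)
The plan is to understand the structure of the permutation $\pi_{q_j}$ by analyzing how consecutive points $x_k = \{k\alpha\}$ are related under the map $k \mapsto k+1$, exploiting the fact that $q_j$ is a continued fraction denominator. The key input is the three-gap theorem: for $n = q_j$ points, the gaps between consecutive (in value) points on $[0,1)$ take at most three distinct lengths, and these lengths are governed by $\|q_{j-1}\alpha\|$ and $\|q_j\alpha\|$ (where $\|\cdot\|$ denotes distance to the nearest integer). Since $\pi$ sends $i$ to the rank of $x_i$, I first want to describe $\pi$ combinatorially: the permutation acts as a ``rotation by a fixed amount'' on the circular arrangement of indices, except that the amount shifts between two values depending on which gap a point sits in front of. Concretely, I would show that passing from the point of rank $i$ to the point of rank $i+1$ corresponds to adding either $q_{j-1}$ or $q_{j-1} - q_j$ (mod $q_j$) to the index, with the choice determined by the three-gap partition. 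This turns $\pi$ into a piecewise-arithmetic map that is completely determined by $q_{j-1} \bmod q_j$.

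Next I would exploit the periodicity of the continued fraction. Writing $\alpha = [a_0; \overline{a_1,\ldots,a_k}]$ and taking $j \equiv k-1 \bmod k$, the recurrence $q_{j+1} = a_{j+1} q_j + q_{j-1}$ together with periodicity of the partial quotients gives a controlled relationship between $q_{j-1}$ and $q_j$. The crucial arithmetic fact I expect to need is that $q_{j-1}^2 \equiv \pm 1 \pmod{q_j}$, which is the analogue of the identity $p_j q_{j-1} - p_{j-1} q_j = (-1)^{j+1}$ specialized at the point where the expansion ``wraps around.'' This near-involutive property of multiplication by $q_{j-1}$ modulo $q_j$ is what should force the cycles to have length dividing $4$: applying $\pi$ twice corresponds (up to the gap-dependent correction terms) to multiplication by $q_{j-1}^2 \equiv \pm 1$, so $\pi^2$ is close to the identity or to a reflection, and $\pi^4$ collapses. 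I would make this precise by tracking the two shift-values through two or four applications and checking the correction terms cancel.

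The main obstacle is controlling the correction terms precisely, i.e.\ bookkeeping which of the two shift amounts applies at each step of the orbit. The clean statement ``$\pi$ is multiplication by $q_{j-1}$'' holds only on the level of the coarse three-gap structure; the actual index satisfies $\pi(i) \equiv q_{j-1} i + (\text{error})$ where the error depends on how many ``long'' versus ``short'' gaps lie below rank $i$. I would handle this by separating the analysis according to the parity of $j$ and the residue of $n$ modulo small numbers (mirroring the case split in Theorem~\ref{th:fibo}), since the sign in $q_{j-1}^2 \equiv \pm 1$ and the number of surviving fixed points depend on these residues. For the golden ratio this recovers Theorem~\ref{th:fibo} because there $k=1$, all partial quotients equal $1$, and $q_{j-1}/q_j = F_{n-1}/F_n$ with the identity $F_{n-1}^2 - F_n F_{n-2} = (-1)^n$; the general quadratic case replaces Fibonacci identities with the corresponding determinant identities for the convergents.

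Once the orbit structure is pinned down, the two conclusions follow by dichotomy: in the case $q_{j-1}^2 \equiv 1 \pmod{q_j}$ the map $\pi^2$ is the identity, yielding only $2$-cycles and fixed points (conclusion~(1)); in the case $q_{j-1}^2 \equiv -1 \pmod{q_j}$ one gets $\pi^2 \approx$ the reflection $i \mapsto -i$, whose orbits under $\pi$ have length $4$ except at the fixed points $i \equiv 0$ and possible order-$2$ points, giving conclusion~(2). I would close by counting the fixed points of $\pi$ (solutions of $(q_{j-1}-1)i \equiv \text{error} \pmod{q_j}$) and of $\pi^2$ to verify that ``at most two fixed points or a single $2$-cycle'' is exactly what remains, completing the proof.
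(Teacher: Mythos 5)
Your algebraic skeleton is the right one and matches the paper's: $\pi$ is (essentially) an affine map $k \mapsto ck + \epsilon \bmod q_j$ with $c^2 \equiv \pm 1 \pmod{q_j}$, so $\pi^2$ or $\pi^4$ collapses to the identity and the exceptional elements are the solutions of a linear congruence. But your derivation has a genuine gap at its crux: the ``crucial arithmetic fact'' $q_{j-1}^2 \equiv \pm 1 \pmod{q_j}$ does \emph{not} follow from periodicity of the partial quotients together with the determinant identity $p_jq_{j-1} - p_{j-1}q_j = \pm 1$. The determinant identity only says that $q_{j-1}$ is, up to sign, the \emph{inverse} of $p_j$ modulo $q_j$; to upgrade it to $q_{j-1}^2 \equiv \pm 1$ you need $p_j \equiv \pm q_{j-1} \pmod{q_j}$, and by the mirror formula $q_{j-1}/q_j = [0;a_j,\ldots,a_1]$ this is equivalent to the prefix $a_1,\ldots,a_j$ being a \emph{palindrome} (Lemma~\ref{lemma:palindrome}). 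Palindromy, not periodicity, is the engine: the paper reduces the quadratic case to Proposition~\ref{th:cycles}, whose hypothesis is exactly a palindromic prefix, and the choice $j \equiv k-1 \bmod k$ is precisely where the palindromic prefixes of the period occur. To see that periodicity alone cannot drive your argument, take $\alpha = [0;\overline{1,2,3}]$ (a quadratic irrational, period $k=3$, not a palindrome) and $j=5 \equiv k-1 \bmod k$: then $q_4 = 13$, $q_5 = 36$, $q_4^2 = 169 \equiv 25 \not\equiv \pm 1 \pmod{36}$, equivalently $p_5 = 25 \not\equiv \pm 13 \pmod{36}$, and the induced map $k \mapsto 25k+1 \bmod 36$ has orbits far longer than $4$ (e.g.\ $1 \to 26 \to 3 \to 4 \to 29 \to \cdots$). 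So with a non-palindromic period your $\pi^4$-collapse simply fails, and nothing in your outline detects where the hypothesis $j \equiv k-1 \bmod k$ is actually used. (This example also shows the palindromic-period input is indispensable to the theorem itself; the robust statement is Proposition~\ref{th:cycles}.)

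The second gap is the unfinished bookkeeping in your first step. You propose to extract $\pi$ from the three-gap structure, concede that this only gives $\pi(i) \equiv q_{j-1}\, i + (\text{error})$ with an error depending on how many long and short gaps lie below rank $i$, and defer ``checking the correction terms cancel'' --- but that is exactly where the work in your route lies, and it is never done. (Note also that your two shift amounts $q_{j-1}$ and $q_{j-1}-q_j$ coincide modulo $q_j$, a hint that the map is genuinely a single affine map rather than piecewise.) The paper sidesteps this entirely with the exchange Lemma~\ref{lem:approx}: replace $\alpha$ by the convergent $p_j/q_j$ itself; each point moves by at most $q_j\left|\alpha - p_j/q_j\right| < 1/(2q_j)$, less than half the spacing of the lattice $\{m/q_j\}$, so the ranking of the first $q_j - 1$ points is unchanged, and Lemma~\ref{lemma:even} pins down the rank of the last point according to the parity of $j$. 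This yields the \emph{exact} formulas $\pi(k) \equiv p_jk \bmod q_j$ ($j$ even) and $\pi(k) \equiv p_jk + 1 \bmod q_j$ ($j$ odd), with no correction terms, after which the palindrome congruence $p_j^2 \equiv \pm 1 \bmod q_j$ gives $\pi^2 = \mathrm{Id}$ or $\pi^4 = \mathrm{Id}$, and the elements in short cycles in the odd case are the at most two solutions of $2k \equiv p_j + 1 \bmod q_j$. Your closing count matches this, but as written it rests on the two unestablished inputs above, the first of which is false in the generality you invoke it.
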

 
The arguments are somewhat flexible and allow us to prove a number of slightly stronger but also slightly more technical results, explained in Section~\ref{sec:lemm}.

\subsection{The general phenomenon.}
Kronecker sequences `naturally rotate around' and this should naturally lead to some structure of the induced permutations.  One could wonder how general that phenomenon is. We illustrate this with two canonical examples: $e$ and $\pi$. The continued fraction expansion of $e$ is known to be
$$ e = [2; 1,2,1,1,4,1,1,6,1,1,8,1,1,10,\dots]$$
 which is not regular but highly structured. The continued fraction approximations
$$ e \sim 2,3,\frac{8}{3},\frac{11}{4},\frac{19}{7},\frac{87}{32},\frac{106}{39},\frac{193}{71},\frac{1264}{465},\frac{1457}{536}, \dots$$
lead to curious permutation patterns. $\sigma_{71}$ decomposes into five 14-cycles and a fixed point, $\sigma_{465}$ has cycles of length 3, 6 and 30, $\sigma_{536}$ has 8 cycles of length 66 and 8 fixed points. There is clearly \textit{some} type of structure but it is less clear what exactly to expect; this may be an interesting avenue for further research. The continued fraction expansion of $\pi$, in contrast, is not known to have any type of regular structure whatsoever
$$ \pi = [3; 7, 15, 1, 292, 1, 1, 1, 2, 1, 3, 1, 14, 2, 1, 1, \dots].$$
The frequency of its digits appears, empirically, to follow the Gauss-Kuzmin law which is a way of saying that $\pi$, with respect to the continued fraction expansion, behaves like a generic real number. The first continued fraction approximants are
$$ \pi \sim 3, \frac{22}{7}, \frac{333}{106}, \frac{355}{113}, \frac{103993}{33102}, \frac{104348}{33215}, \dots$$
The permutations associated to $(\pi k \mod 1)_{k=1}^{q_j}$ appear to have some structure but the cycles need no longer be short. For example, $\sigma_{113}$ is given by sixteen 7-cycles and a fixed point, $\sigma_{33102}$ has cycles of length 54 and 1836. $\sigma_{33215}$ is made up of cycles of length 1, 2, 4, 6, 12 and 72. This leads to many natural questions about the behavior of the Kronecker permutations for general structured and `unstructured' irrationals. In particular, if we model the continued fraction expansion of a `generic' irrational by creating a sequence of continued fraction coefficients by taking independent samples from the Gauss-Kuzmin distribution, what can be said about the cycle structure of the Kronecker Permutations of such a `random' real number?

\subsection{Related Results.}\label{sec:motiv}
The Kronecker sequence has played a central role in mathematics \cite{ Cassels,Koksma_Unif,KuiNie,Nie92} and is intimately connected to the continued fraction expansion \cite{Bugeaud_2012}. In particular, they give rise to a type of sequence with optimal `regularity' on $\mathbb{S}^1$ (Schmidt's Theorem \cite{Schmidt}). The problem of finding point sets with good distribution in higher dimension is unsolved, see~\cite{Pano} for a complete overview, or~\cite{BilykSmall} for the most recent improvement. It was recently observed~\cite{CDKP,MPMC} that there exist point sets \emph{much} more regular than our previous best constructions. More importantly, their structure is very different from past constructions, suggesting that we may have missed a completely different class of constructions that could improve discrepancy bounds in higher dimensions. \cite{PNAS} suggests that relying on permutations, and in particular the Fibonacci permutation in two dimensions, could help reduce the computational burden and explore these constructions.
Separately, it is conjectured that the Fibonacci set (see Fig. 1) may play a very special role in the calculus of variations on $\mathbb{T}^2$. Minimizing a notion of energy over configurations of $n$ points on $\mathbb{T}^1 \cong \mathbb{S}^1$ will often lead to $n$ uniformly distributed points. No analogous `canonical' set exists on $\mathbb{T}^2$. In the last decade, the Fibonacci set has been proposed as a type of `canonical' optimal point set for tensor product energies on $\mathbb{T}^2$, such as the periodic $L_2$ discrepancy~\cite{BilykTY12, Hinrichs}. More recently, an optimization approach in~\cite{Nagel} showed its optimality for a wider class of functions, albeit for only a few points. The regularity phenomenon reported in our paper was discovered by accident when investigating these questions.

\section{Lemmas and Technical Results}\label{sec:lemm}

\subsection{Continued Fraction Expansions}

This section contains a basic introduction of continued fraction expansions, as well as all the related results that we will require. The reader can find a much more thorough description in~\cite{Bugeaud_2004}.
To begin, any real $\alpha$ can be described by a (possibly infinite) sequence of integers $[a_0;a_1,\ldots,a_k,\ldots]$. This sequence is such that
$$\alpha=a_0+\frac{1}{a_1+\frac{1}{a_2+\frac{1}{a_3+\ldots}}}.$$
This sequence is called the continued fraction expansion of $\alpha$, and each $a_k$ is the $k$-th convergent of $\alpha$. Rationals correspond exactly to the numbers with a finite continued fraction expansion. Since each prefix of the sequence $[a_0;a_1,\ldots,a_k]$ is finite, this corresponds to a rational, the $k$-th partial quotient, which is written $p_k/q_k$ where both $p_k$ and $q_k$ are coprime integers.

\begin{lemma}[Theorem 1.3 in~\cite{Bugeaud_2004}]\label{lemma:recursive}
    Setting
    $p_{-1}=1,\mbox{~} p_{0}=a_0, q_{-1}=0,\mbox{~and~} q_{0}=1,$
    we have, for any $n \in \mathbb{N}_{>0}$,
    $$p_n=a_{n}p_{n-1}+p_{n-2}\mbox{~and~}q_n=a_{n}q_{n-1}+q_{n-2}.$$
\end{lemma}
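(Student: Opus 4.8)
The final statement in the excerpt is Lemma \ref{lemma:recursive} (Theorem 1.3 in Bugeaud), which is the recursive formula for continued fraction convergents:
$$p_n = a_n p_{n-1} + p_{n-2} \quad\text{and}\quad q_n = a_n q_{n-1} + q_{n-2}.$$

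This is a standard result. Let me think about how I would prove it.The statement is the classical recursion for continued-fraction convergents, and the plan is to prove it by induction after first upgrading it to a more flexible identity in which the final entry of the continued fraction is allowed to be an arbitrary positive real variable. Concretely, I would introduce auxiliary quantities $\tilde{p}_n, \tilde{q}_n$ defined \emph{directly} by the proposed recursion $\tilde{p}_n = a_n \tilde{p}_{n-1} + \tilde{p}_{n-2}$ and $\tilde{q}_n = a_n \tilde{q}_{n-1} + \tilde{q}_{n-2}$ together with the stated initial values, and then prove that these coincide with the reduced numerator and denominator $p_n, q_n$ of the $n$-th convergent. This separation is what avoids circularity: the recursion holds for $\tilde{p}_n, \tilde{q}_n$ by fiat, and the content of the lemma becomes the identification $(p_n,q_n)=(\tilde{p}_n,\tilde{q}_n)$.

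The central step is to prove, by induction on $n$, the generalized evaluation
$$[a_0; a_1, \ldots, a_{n-1}, x] = \frac{x\,\tilde{p}_{n-1} + \tilde{p}_{n-2}}{x\,\tilde{q}_{n-1} + \tilde{q}_{n-2}}$$
for every real $x>0$. The base case $n=1$ is the direct computation $a_0 + 1/x = (a_0 x + 1)/x$, which matches the right-hand side given $\tilde{p}_0=a_0,\ \tilde{p}_{-1}=1,\ \tilde{q}_0=1,\ \tilde{q}_{-1}=0$. For the inductive step I would use the elementary rewriting $[a_0;\ldots,a_{n-1},a_n,x] = [a_0;\ldots,a_{n-1},a_n + 1/x]$, apply the inductive hypothesis with the real number $a_n + 1/x$ in the final slot, clear fractions by multiplying numerator and denominator by $x$, and regroup. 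The defining relations $\tilde{p}_n = a_n \tilde{p}_{n-1} + \tilde{p}_{n-2}$ and $\tilde{q}_n = a_n \tilde{q}_{n-1} + \tilde{q}_{n-2}$ then collapse the expression to $(x\tilde{p}_n + \tilde{p}_{n-1})/(x\tilde{q}_n + \tilde{q}_{n-1})$, which is precisely the identity at level $n+1$. Specializing $x = a_n$ gives $[a_0;\ldots,a_n] = \tilde{p}_n/\tilde{q}_n$.

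It then remains to verify that $\tilde{p}_n/\tilde{q}_n$ is already in lowest terms, so that it truly equals the reduced pair $(p_n,q_n)$ and not a common multiple. For this I would establish the determinant identity
$$\tilde{p}_n \tilde{q}_{n-1} - \tilde{p}_{n-1}\tilde{q}_n = (-1)^{n-1}$$
by a short telescoping induction straight from the recursion. Since any common divisor of $\tilde{p}_n$ and $\tilde{q}_n$ must divide $\pm 1$, this forces $\gcd(\tilde{p}_n,\tilde{q}_n)=1$, and because $\tilde{q}_n>0$ the pair $(\tilde{p}_n,\tilde{q}_n)$ must coincide with $(p_n,q_n)$, yielding the recursion for the genuine convergents.

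The only real subtlety — the main obstacle to watch for — is exactly this logical ordering: one must not invoke the recursion for $p_n,q_n$ while proving it. Keeping the recursion-defined $\tilde{p}_n,\tilde{q}_n$ notationally distinct until the coprimality argument identifies them with $p_n,q_n$ is what keeps the induction honest, and the generalized-variable identity is precisely the device that lets the induction close without circularity.
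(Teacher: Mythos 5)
Your proof is correct, and there is nothing in the paper to compare it against: the paper states this lemma as a quoted result (Theorem 1.3 in Bugeaud's book) and gives no proof of its own. Your argument is the standard textbook one, and its two points of care are well placed: defining $\tilde{p}_n,\tilde{q}_n$ by the recursion and only later identifying them with the reduced pair $(p_n,q_n)$ avoids circularity, and the identity $[a_0;a_1,\ldots,a_{n-1},x]=(x\tilde{p}_{n-1}+\tilde{p}_{n-2})/(x\tilde{q}_{n-1}+\tilde{q}_{n-2})$ for real $x>0$ is exactly the device that lets the induction close via $[a_0;\ldots,a_n,x]=[a_0;\ldots,a_{n-1},a_n+1/x]$. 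The coprimality step via $\tilde{p}_n\tilde{q}_{n-1}-\tilde{p}_{n-1}\tilde{q}_n=(-1)^{n-1}$ is needed precisely because the paper defines $p_k/q_k$ as a fraction in lowest terms, so your identification argument is not optional padding but the step that makes the lemma match the paper's definition. (One cosmetic remark: your sign convention $(-1)^{n-1}$ differs from the paper's Lemma~\ref{lemma:altern}, which states $(-1)^n$; with the initial values $p_{-1}=1$, $q_{-1}=0$ your sign is the consistent one, and the discrepancy is immaterial for the coprimality conclusion.)
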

As done in the proof of Theorem 2.1 in~\cite{AdamBug}, these relations can be written out in matrix form to obtain the following result.

\begin{lemma}\label{lemma:palindrome}
    Let $\beta$ be an irrational between $0$ and 1, with continued fraction expansion $\beta=[0;a_1,a_2,\ldots]$. Then $p_{n-1}=q_{n}$ if and only if $a_0,\ldots,a_n$ is a palindrome.
\end{lemma}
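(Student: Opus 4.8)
The plan is to lift the scalar recursions of Lemma~\ref{lemma:recursive} to a single matrix identity and then exploit the symmetry of the matrices involved. Writing \(M(a) = \begin{pmatrix} a & 1 \\ 1 & 0 \end{pmatrix}\), the two recursions \(p_n = a_n p_{n-1} + p_{n-2}\) and \(q_n = a_n q_{n-1} + q_{n-2}\), together with the initial data, are exactly the assertion that
$$\begin{pmatrix} p_n & p_{n-1} \\ q_n & q_{n-1} \end{pmatrix} = M(a_0)\, M(a_1) \cdots M(a_n).$$
First I would establish this identity by induction on \(n\): the base case \(n=0\) is precisely the prescribed initial values, and right-multiplying the matrix for \(n-1\) by \(M(a_n)\) reproduces the two recursions for \(p_n\) and \(q_n\) simultaneously. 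This is the ``matrix form'' alluded to after Lemma~\ref{lemma:recursive}.

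The decisive structural feature is that each factor \(M(a_i)\) is symmetric, so transposing the product reverses the order of the factors:
$$\left( M(a_0) \cdots M(a_n) \right)^{T} = M(a_n) \cdots M(a_0).$$
The right-hand side is the convergent matrix of the reversed tuple \((a_n, a_{n-1}, \ldots, a_0)\). Reading off entries, the transpose of \(\begin{pmatrix} p_n & p_{n-1} \\ q_n & q_{n-1} \end{pmatrix}\) is \(\begin{pmatrix} p_n & q_n \\ p_{n-1} & q_{n-1} \end{pmatrix}\), so the equality \(p_{n-1} = q_n\) is exactly the statement that the convergent matrix equals its own transpose, i.e.\ that it is symmetric. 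The forward direction is then immediate: if \((a_0,\ldots,a_n)\) is a palindrome, reversing the factors leaves the product unchanged, the matrix coincides with its transpose, and \(p_{n-1}=q_n\) follows at once.

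The substance of the lemma is the converse, and this is where I expect the real work to lie. From the symmetry of the convergent matrix I must deduce that the tuple genuinely equals its reversal. The clean route is to argue that the map \((a_0,\ldots,a_n) \mapsto M(a_0)\cdots M(a_n)\) is injective on admissible partial-quotient tuples: given the matrix, the convergent \(p_n/q_n = [a_0; a_1, \ldots, a_n]\) and its reversal \(p_n/p_{n-1} = [a_n; a_{n-1}, \ldots, a_0]\) are determined, and the partial quotients can be peeled off one at a time by the Euclidean algorithm. Injectivity upgrades the matrix identity \(M(a_0)\cdots M(a_n) = M(a_n)\cdots M(a_0)\) into the equality of tuples \((a_0,\ldots,a_n) = (a_n,\ldots,a_0)\), which is the palindrome condition. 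The only genuinely delicate point is the standard non-uniqueness of finite continued fractions (the identity \([\ldots,a_n] = [\ldots,a_n-1,1]\)) together with the boundary behaviour when \(a_0 = 0\), where the reversed tuple ends in \(0\); pinning down the admissibility convention so that the recovery of the \(a_i\) is unambiguous is the main obstacle to making the injectivity argument watertight.
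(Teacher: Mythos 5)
Your matrix setup --- the product $M(a_0)M(a_1)\cdots M(a_n)$ of symmetric factors, with transposition reversing the order --- is exactly the mechanism of the paper's proof (there attributed to Frame), and your Euclidean-recovery injectivity for the converse is equivalent to the paper's route via the mirror formula $q_{n-1}/q_n=[0;a_n,\ldots,a_1]$ together with uniqueness of continued fraction expansions. But the obstacle you flag in your last sentence and leave unresolved, the boundary behaviour at $a_0=0$, is not a bookkeeping convention to be fixed later: it is the one point where your proof, as set up, fails, because with the factor $M(a_0)=\left(\begin{smallmatrix}0&1\\1&0\end{smallmatrix}\right)$ included the equivalence you are proving is vacuous. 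Indeed, for $\beta\in(0,1)$ and $n\geq 1$ one always has $p_{n-1}\leq q_{n-1}<q_n$, so $p_{n-1}=q_n$ never holds; and the reversal of $(0,a_1,\ldots,a_n)$ is $(a_n,\ldots,a_1,0)$, so equality would force $a_n=0$, which is inadmissible --- both sides of the equivalence are identically false. Your injectivity programme then has nothing to bite on; worse, the reversed tuple is not an admissible partial-quotient tuple at all, so ``injectivity on admissible tuples'' cannot even be invoked for it.

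The repair, which is what the paper does, is to drop the $a_0$ factor entirely and work with $M(a_1)\cdots M(a_n)=\left(\begin{smallmatrix}q_n&q_{n-1}\\p_n&p_{n-1}\end{smallmatrix}\right)$ (the rows swap relative to your product precisely because $M(0)$ is the swap matrix). Symmetry of this product reads $p_n=q_{n-1}$, and the palindrome condition concerns $(a_1,\ldots,a_n)$ only; this is the version the paper's proof actually establishes and the one used later (in the proof of Proposition~\ref{th:cycles}, combined with $p_nq_{n-1}-p_{n-1}q_n=(-1)^n$ to get $p_n^2\equiv(-1)^n \bmod q_n$) --- the $p_{n-1}=q_n$ in the lemma's statement is an index slip that you inherited at face value. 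Once $a_0$ is dropped, your other worry, the ambiguity $[\ldots,a_n]=[\ldots,a_n-1,1]$, also evaporates: the tuples $(a_1,\ldots,a_n)$ and $(a_n,\ldots,a_1)$ being compared have the same length and all entries at least $1$, and since the two continued fraction representations of a rational differ in length by exactly one, same-length representations coincide. With these two repairs your argument closes and is essentially the paper's proof.
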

 We add the proof of this lesser-known result for the convenience of the reader.
\begin{proof}
    Using a framework of Frame~\cite{Frame}, the matrix 
    $$M:=\begin{pmatrix}
q_n & q_{n-1}\\
p_n & p_{n-1}
\end{pmatrix}$$ containing the numerators and denominators of the $n-1$-th and $n$-th partial quotients can be decomposed in
$$M=\begin{pmatrix}
a_1 & 1\\
1 & 0
\end{pmatrix}\begin{pmatrix}
a_2 & 1 \\
1 & 0 
\end{pmatrix}\ldots\begin{pmatrix}
a_n & 1 \\
1 & 0 
\end{pmatrix}$$
using the $n$ first convergents. Using the mirror formula (see~\cite{ADAMCZEWSKI2007}, or an older version under the name of folding formula~\cite{vdPoorten}), we have that
$q_{n-1}/q_n=[0;a_n,\ldots,a_1]$
$M$ is symmetric if and only if $p_{n}=q_{n-1}$. However, we also have that $p_n/q_n=[0;a_1,\ldots, a_n]$
by definition of the continued fraction expansion. Since the continued fraction expansion is unique, we get that $a_1,\ldots, a_n$ is a palindrome.
\end{proof}

There also exists a general relation between the $p_n$ and $q_n$.
\begin{lemma}[Theorem 1.4 in~\cite{Bugeaud_2004}]\label{lemma:altern}
    For any $n>0$, $p_nq_{n-1}-p_{n-1}q_{n}=(-1)^n$.
\end{lemma}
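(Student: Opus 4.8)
The plan is to prove the identity by induction on $n$, driving everything from the recurrences $p_n = a_n p_{n-1} + p_{n-2}$ and $q_n = a_n q_{n-1} + q_{n-2}$ supplied by Lemma~\ref{lemma:recursive}. The base case is a direct computation from the initial data $p_{-1}=1$, $p_0=a_0$, $q_{-1}=0$, $q_0=1$: one substitutes $p_1 = a_1 a_0 + 1$ and $q_1 = a_1$ into $p_1 q_0 - p_0 q_1$, which collapses to a single constant and fixes the sign at the smallest index.

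For the inductive step, assume the claim holds at index $n-1$. Substituting the two recurrences into $p_n q_{n-1} - p_{n-1} q_n$ and expanding, the two terms carrying the factor $a_n p_{n-1} q_{n-1}$ cancel identically, leaving
$$ p_n q_{n-1} - p_{n-1} q_n = p_{n-2} q_{n-1} - p_{n-1} q_{n-2} = -\left( p_{n-1} q_{n-2} - p_{n-2} q_{n-1} \right). $$
The right-hand side is exactly the expression at index $n-1$ with its sign reversed, so the alternating sign propagates and the induction closes.

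A slicker alternative, which dovetails with the matrix setup already introduced for Lemma~\ref{lemma:palindrome}, is to read the identity off from a determinant. The matrix $M$ factors as a product of $n$ copies of $\begin{pmatrix} a_i & 1 \\ 1 & 0 \end{pmatrix}$, each having determinant $-1$; multiplicativity of the determinant then gives $\det M = (-1)^n$, and expanding $\det M$ in terms of the entries $p_n$, $p_{n-1}$, $q_n$, $q_{n-1}$ reproduces the stated relation immediately.

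There is essentially no serious obstacle here: the result is classical and the computation is routine. The only point requiring care is the bookkeeping of signs together with the handling of the base index, where the convention $q_{-1}=0$ must be used correctly so that the claimed power of $-1$ matches at the smallest value of $n$.
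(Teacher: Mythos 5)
Both of your routes are mechanically fine, and your ``slicker alternative'' is in fact the paper's own proof: the paper disposes of this lemma with the one-line remark that it follows from the matrix factorization used for Lemma~\ref{lemma:palindrome} by taking determinants. Your inductive step is also correct as algebra: substituting the recurrences cancels the $a_n p_{n-1} q_{n-1}$ terms and reverses the sign, so the quantity alternates from one index to the next.

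The gap is precisely the point you wave away, namely the claim that ``the claimed power of $-1$ matches at the smallest value of $n$.'' It does not. With the paper's own conventions from Lemma~\ref{lemma:recursive} ($p_{-1}=1$, $p_0=a_0$, $q_{-1}=0$, $q_0=1$), the base case you describe evaluates to
$$p_1 q_0 - p_0 q_1 = (a_1 a_0 + 1)\cdot 1 - a_0 a_1 = +1 \neq (-1)^1,$$
so your induction, carried out honestly, proves $p_n q_{n-1} - p_{n-1} q_n = (-1)^{n-1}$, not $(-1)^n$. The determinant route says the same thing: expanding $\det M$ for the matrix $M$ of Lemma~\ref{lemma:palindrome} gives $\det M = q_n p_{n-1} - q_{n-1} p_n = (-1)^n$, which is the \emph{negative} of the stated left-hand side. (The statement in the paper has the products transposed relative to Bugeaud's Theorem 1.4, which reads $q_n p_{n-1} - p_n q_{n-1} = (-1)^n$; a concrete check with $\alpha = [0;\overline{1}]$ at $n=6$ gives $p_6 q_5 - p_5 q_6 = 8\cdot 8 - 5\cdot 13 = -1$, whereas the printed claim would demand $+1$.) So a correct write-up cannot simply assert that the sign ``fixes'' itself at the base: it must either prove the correctly signed identity $p_n q_{n-1} - p_{n-1} q_n = (-1)^{n-1}$ or flag that the lemma as printed is off by one power of $-1$. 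Since everything downstream (e.g.\ the congruence $p_n^2 \equiv \pm 1 \bmod q_n$ in Proposition~\ref{th:cycles}) hinges on which parity of $n$ produces which sign, declaring the base case settled without computing it is exactly the step where your proof fails to establish the statement it set out to prove.
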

This result can be showed using the matrix expression that led to Lemma~\ref{lemma:palindrome} and considering the determinant. 
Furthermore, these partial quotients correspond to the sequence of the best possible approximations to $\alpha$: each $p_n/q_n$ is a better approximation than any other $a/b$ where $b\leq q_n$. It is also known that the sequence $p_n/q_n$ alternates in approximating $\alpha$ from above and from below.
\begin{lemma}[Lemma 1.2 in~\cite{Bugeaud_2004}]\label{lemma:even}
    For $n>0$ and $\alpha$ irrational
    $$\alpha > \frac{p_n}{q_n} $$
     if and only if $n$ is even.
\end{lemma}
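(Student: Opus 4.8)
The plan is to pin down the sign of $\alpha - p_n/q_n$ exactly by writing $\alpha$ in terms of its convergents and the tail of its expansion. First I would introduce the $(n+1)$-th complete quotient $\alpha_{n+1}=[a_{n+1};a_{n+2},\ldots]$, which satisfies $\alpha_{n+1}>1>0$, and establish the representation
$$\alpha = \frac{\alpha_{n+1}\,p_n + p_{n-1}}{\alpha_{n+1}\,q_n + q_{n-1}}.$$
This identity is proved by a short induction on $n$ using only the recursion of Lemma~\ref{lemma:recursive}: the base case is immediate from the definitions of $p_{-1},p_0,q_{-1},q_0$, and the inductive step combines $\alpha_{n+1}=a_{n+1}+1/\alpha_{n+2}$ with the recurrences for $p_{n+1}$ and $q_{n+1}$.

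Subtracting $p_n/q_n$ and clearing denominators, I would compute
$$\alpha - \frac{p_n}{q_n} = \frac{q_n(\alpha_{n+1}p_n + p_{n-1}) - p_n(\alpha_{n+1}q_n + q_{n-1})}{q_n(\alpha_{n+1}q_n + q_{n-1})} = \frac{p_{n-1}q_n - p_n q_{n-1}}{q_n(\alpha_{n+1}q_n + q_{n-1})},$$
where the $\alpha_{n+1}$ terms cancel. By Lemma~\ref{lemma:altern} the numerator equals $\pm(-1)^n$, a nonzero quantity whose sign depends only on the parity of $n$, while the denominator $q_n(\alpha_{n+1}q_n+q_{n-1})$ is strictly positive since $q_n>0$, $q_{n-1}\ge 0$, and $\alpha_{n+1}>0$. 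Hence the sign of $\alpha-p_n/q_n$ is governed entirely by the parity of $n$, and evaluating a single convergent (for instance $p_0/q_0=a_0<\alpha$) fixes the orientation so that $\alpha>p_n/q_n$ precisely when $n$ is even.

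The only genuine obstacle is establishing the complete-quotient representation, since it has not been stated earlier; but it is an elementary induction off Lemma~\ref{lemma:recursive}. A fully self-contained alternative avoids complete quotients altogether: using the recursion one finds $\frac{p_n}{q_n}-\frac{p_{n-2}}{q_{n-2}}=\frac{a_n(-1)^n}{q_n q_{n-2}}$ and $\frac{p_n}{q_n}-\frac{p_{n-1}}{q_{n-1}}=\frac{\pm(-1)^n}{q_{n-1}q_n}$ (the sign coming from Lemma~\ref{lemma:altern}), so the even-indexed convergents increase, the odd-indexed ones decrease, every even convergent lies below every odd one, and both monotone subsequences converge to $\alpha$. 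This squeezes $\alpha$ strictly between consecutive convergents and yields the same parity statement. I would present the complete-quotient version as the main argument, since it delivers the sign directly without invoking convergence of $p_n/q_n$ to $\alpha$.
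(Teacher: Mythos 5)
The paper offers no proof of this lemma at all: it is imported verbatim as Lemma 1.2 from Bugeaud's book, so there is no in-paper argument to compare against. Judged on its own, your proof is correct and is essentially the standard textbook argument. The complete-quotient identity $\alpha=(\alpha_{n+1}p_n+p_{n-1})/(\alpha_{n+1}q_n+q_{n-1})$ follows by the routine induction you describe from Lemma~\ref{lemma:recursive}; the cancellation reducing $\alpha-p_n/q_n$ to $(p_{n-1}q_n-p_nq_{n-1})/\bigl(q_n(\alpha_{n+1}q_n+q_{n-1})\bigr)$ is a one-line computation; and the denominator is strictly positive since $q_n\geq 1$, $q_{n-1}\geq 0$ and $\alpha_{n+1}>1$. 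One detail deserves particular mention: your decision to write the numerator only as $\pm(-1)^n$ and to fix the orientation by the single evaluation $p_0/q_0=a_0=\lfloor\alpha\rfloor<\alpha$ is not merely cautious, it quietly sidesteps a sign inconsistency in the paper itself. With the initial values of Lemma~\ref{lemma:recursive} ($p_{-1}=1$, $q_{-1}=0$, $p_0=a_0$, $q_0=1$) one computes $p_nq_{n-1}-p_{n-1}q_n=(-1)^{n-1}$ (for instance $p_1q_0-p_0q_1=1$), so applying the paper's Lemma~\ref{lemma:altern} literally, with its stated sign $(-1)^n$, would yield exactly the reversed parity; your anchoring step makes the argument immune to this convention issue. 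Your fallback route via the interlacing monotone subsequences of even- and odd-indexed convergents is also sound, and you are right to prefer the complete-quotient version as the main argument, since the alternative additionally requires knowing $q_n\to\infty$ and hence that $p_n/q_n$ converges to $\alpha$.
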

 The final Lemma required is a bound on the error each $k$-th partial quotient approximation is making, shown by Hurwitz in~\cite{Hurwitz1891}.

\begin{lemma}\label{lemma:Hurwitz}
    Let $\alpha$ be an irrational with partial quotient $p/q$. Then
    $$\left|\alpha -\frac{p}{q}\right|\leq \frac{1}{\sqrt{5}q^2}.$$
\end{lemma}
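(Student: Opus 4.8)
The plan is to prove the bound for a single convergent $p/q = p_n/q_n$ by passing to the complete quotient. First I would record the standard identity $\alpha = (\alpha_{n+1} p_n + p_{n-1})/(\alpha_{n+1} q_n + q_{n-1})$, where $\alpha_{n+1} = [a_{n+1}; a_{n+2}, \ldots] > 1$ is the $(n+1)$-st complete quotient; this follows by splitting the continued fraction expansion after the $n$-th term. Subtracting $p_n/q_n$, clearing denominators, and invoking the determinant relation $p_n q_{n-1} - p_{n-1} q_n = (-1)^n$ of Lemma~\ref{lemma:altern}, the numerator collapses to $\pm 1$, and I obtain the exact error formula $\left|\alpha - p_n/q_n\right| = 1/\bigl(q_n^2(\alpha_{n+1} + q_{n-1}/q_n)\bigr)$. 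Writing $\lambda_n := \alpha_{n+1} + q_{n-1}/q_n$, the entire lemma reduces to the single inequality $\lambda_n \geq \sqrt 5$.

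Next I would attack $\lambda_n \geq \sqrt 5$ using the two-term recursion. Since $\alpha_{n+1} > 1$ and $q_{n-1}/q_n \in (0,1)$, the crude bound $\lambda_n > 1$ already reproduces the classical estimate $\left|\alpha - p_n/q_n\right| < 1/q_n^2$, so the real content is sharpening the constant from $1$ to $\sqrt 5$. Using Lemma~\ref{lemma:recursive} I would expand $q_{n-1}/q_n = 1/(a_n + q_{n-2}/q_{n-1})$ and use $\alpha_{n+1} = a_{n+1} + 1/\alpha_{n+2}$, so that $\lambda_n$ is governed by the quotients $a_{n+1}, a_n$ and the two tails. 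Every partial quotient is at least $1$, and $\lambda_n$ is smallest when the quotients are as small as possible, so the extremal case is $a_i \equiv 1$, i.e. $\alpha = \varphi$: there $\alpha_{n+1} = \varphi$ and $q_{n-1}/q_n \to 1/\varphi$, giving $\lambda_n \to \varphi + 1/\varphi = \sqrt 5$. This both explains the appearance of $\sqrt 5$ and identifies it as the extremal constant.

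The main obstacle is exactly this final lower bound, where the sharpness of the constant makes a term-by-term estimate insufficient. The delicate feature is that $q_{n-1}/q_n$ does not approach its limit $1/\varphi$ monotonically — for the golden ratio it equals $F_{n-1}/F_n$, which oscillates around $1/\varphi$ — so one cannot bound the two summands of $\lambda_n$ separately and must instead control $\alpha_{n+1}$ and $q_{n-1}/q_n$ \emph{jointly}. I would handle this by the classical device: assume $\lambda_n < \sqrt 5$ and combine the recursion of Lemma~\ref{lemma:recursive} with $\alpha_{n+1} = a_{n+1} + 1/\alpha_{n+2}$ to extract a constraint on the neighboring denominators $q_{n-1}, q_{n-2}$ and the tail $\alpha_{n+2}$, which, using the minimality $a_i \geq 1$ together with the alternation recorded in Lemma~\ref{lemma:even}, is incompatible with the convergent data of an irrational $\alpha$. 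Carrying this joint contradiction computation through is the technical heart of the argument, and it is precisely the step where the value $\sqrt 5$, rather than any smaller constant, is forced.
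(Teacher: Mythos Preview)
The paper does not prove this lemma; it merely attributes it to Hurwitz. But there is a more serious issue: the statement as written is false, and this is precisely where your argument breaks.

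Your reduction is correct. Via the complete-quotient identity and Lemma~\ref{lemma:altern} one has $\left|\alpha - p_n/q_n\right| = 1/\bigl(q_n^2 \lambda_n\bigr)$ with $\lambda_n = \alpha_{n+1} + q_{n-1}/q_n$, so the lemma is equivalent to $\lambda_n \geq \sqrt 5$ for \emph{every} $n$. This fails already for $\alpha = (\sqrt 5 - 1)/2 = [0;\overline{1}]$ at $n = 2$: there $\alpha_3 = \varphi$ and $q_1/q_2 = 1/2$, so $\lambda_2 = \varphi + \tfrac12 \approx 2.118 < \sqrt 5 \approx 2.236$, i.e.\ $\left|\alpha - \tfrac12\right| \approx 0.118 > 1/(4\sqrt 5) \approx 0.112$. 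You correctly flagged the danger --- the oscillation of $q_{n-1}/q_n$ around $1/\varphi$ --- but the ``classical device'' in your last paragraph cannot manufacture a contradiction from $\lambda_n < \sqrt 5$ alone, because that configuration genuinely occurs. What Hurwitz actually proved is that among any three \emph{consecutive} convergents at least one has $\lambda_n \geq \sqrt 5$ (equivalently, infinitely many convergents satisfy the bound); your final paragraph is reaching toward that argument, and it simply does not deliver the bound for a single prescribed convergent. The uniform estimate valid for every convergent is only $\left|\alpha - p_n/q_n\right| < 1/(q_n q_{n+1}) < 1/q_n^2$, and the constant cannot be sharpened to $1/\sqrt 5$ uniformly in $n$. (For the paper's sole application, Lemma~\ref{lem:approx}, the $\sqrt 5$ is not actually needed.)
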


\subsection{Useful results} Building on these continued fraction expansion properties, we will need a couple of results to prove Theorem~\ref{th:fibo}. The first proposition is a generalization of Theorem~\ref{th:quad}. Irrationals for which the continued fraction expansion is `often' palindromic without being periodic are difficult to write down in closed form, as such we prefer to focus on quadratic irrationals in the main result. 

\begin{proposition}\label{th:cycles}
 Let $\alpha$ be an irrational in $[0,1)$ such that the first $n$ terms of its continued fraction expansion $[0;a_1,\ldots,a_n]$ form a palindrome. Then, given $p_n/q_n$ the $n$-th convergent of $\alpha$, the permutation induced by $(\{k\alpha\})_{k \in \{1,\ldots,q_n\}}$, $\pi_{q_n}$, is composed of one of the two following options:
 \begin{enumerate}
     \item If $n$ is even, fixed points and 2-cycles.
     \item If $n$ is odd, 4-cycles with at most two fixed points or a single 2-cycle.
 \end{enumerate}
\end{proposition}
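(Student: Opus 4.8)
The plan is to show that, up to a controlled perturbation, the Kronecker permutation $\pi_{q_n}$ is nothing but multiplication by $p_n$ on the residues modulo $q_n$, and then to read off the cycle structure from the number-theoretic identity that the palindrome hypothesis forces on $p_n$. First I would replace the true points $\{k\alpha\}$ by the rational model $\{k p_n/q_n\}$. Since $\gcd(p_n,q_n)=1$, the residues $k p_n \bmod q_n$ for $k=1,\dots,q_n$ run through a complete residue system, so the $q_n$ model points are exactly $0,1/q_n,\dots,(q_n-1)/q_n$ in some order, and the rank of $\{kp_n/q_n\}$ equals $(k p_n \bmod q_n)+1$. Identifying the positions $1,\dots,q_n$ with $\mathbb{Z}/q_n\mathbb{Z}$ via $r\mapsto r-1$, this says the model permutation is $k\mapsto p_n k \pmod{q_n}$. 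The real content of this step is to prove that the genuine ordering of $(\{k\alpha\})_{k=1}^{q_n}$ agrees with that of the model, i.e. that $\pi_{q_n}(k)\equiv p_n k \pmod{q_n}$ for all but possibly the one or two extreme indices.

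For this order-preservation I would use Lemma~\ref{lemma:Hurwitz} (or merely $|\alpha-p_n/q_n|=\|q_n\alpha\|/q_n<1/q_n^2$, which follows from the best-approximation property) to bound the displacement of the $k$-th model point by $|k(\alpha-p_n/q_n)|<1/q_n$ for $k\le q_n$, that is, by strictly less than the model spacing. Because the sign of $\alpha-p_n/q_n$ is fixed and determined by the parity of $n$ via Lemma~\ref{lemma:even}, all points drift in the same direction at a rate proportional to their index, so no two \emph{interior} model points can swap order. The only delicate place is the wrap-around near $0$ and $1$, where the point of index $q_n$ (value $\{q_n\alpha\}$, lying just above $0$ or just below $1$ according to parity) may fail to occupy its model rank. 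I expect this boundary analysis to be the main obstacle: one must pin down exactly which extreme index is misplaced and show that this either leaves the relevant residue-$0$ (and, when $q_n$ is even, residue-$q_n/2$) fixed points intact or fuses them into a single transposition.

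With the identification in hand, the cycle structure becomes pure arithmetic. By Lemma~\ref{lemma:palindrome} the palindrome hypothesis gives $p_n=q_{n-1}$, and substituting this into the determinant identity of Lemma~\ref{lemma:altern} yields $p_n^2=q_{n-1}^2\equiv \pm1 \pmod{q_n}$, the sign being $+1$ in the even case and $-1$ in the odd case of the statement (its value is read off from Lemma~\ref{lemma:altern} together with Lemma~\ref{lemma:even}). When the sign is $+1$, multiplication by $p_n$ is an involution on $\mathbb{Z}/q_n\mathbb{Z}$, so every cycle has length $1$ or $2$, which is part $(1)$. When the sign is $-1$, $p_n$ has multiplicative order $4$, so every cycle length divides $4$; it then remains to count the exceptional short cycles.

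To finish part $(2)$ I would count fixed points and possible $2$-cycles of $k\mapsto p_n k$. A fixed point needs $(p_n-1)k\equiv 0 \pmod{q_n}$, so their number is $\gcd(p_n-1,q_n)$; any odd prime $\ell$ dividing this gcd would force $1\equiv -1 \pmod \ell$ through $p_n^2\equiv -1$, and $4\mid q_n$ is excluded for the same reason, so the gcd is $1$ or $2$. A nontrivial $2$-cycle needs $p_n^2k\equiv k$ but $p_nk\not\equiv k$, i.e. $2k\equiv 0 \pmod{q_n}$, which is only possible when $q_n$ is even. I would then feed the boundary bookkeeping from the second step back in to decide, in the even-$q_n$ subcase, whether the two residue-fixed points survive as two genuine fixed points of $\pi_{q_n}$ or instead merge into the single $2$-cycle permitted by the statement, thereby yielding exactly the dichotomy ``$4$-cycles with at most two fixed points or a single $2$-cycle'' and completing the proof.
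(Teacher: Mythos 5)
Your overall strategy is the same as the paper's (replace $\alpha$ by the convergent $p_n/q_n$, check the ordering is unchanged via the approximation quality, then use the palindrome identity $p_n=q_{n-1}$ of Lemma~\ref{lemma:palindrome} together with Lemma~\ref{lemma:altern} to get $p_n^2\equiv(-1)^n \pmod{q_n}$ and read off the cycle structure), and your interior order-preservation argument via same-direction drift is a fine variant of the paper's half-spacing bound. The genuine gap is in your identification of the permutation with multiplication by $p_n$. The rank of the model point of index $k$ is $(kp_n \bmod q_n)+1$, and your identification $r\mapsto r-1$ does not remove the constant: it yields the affine map $x\mapsto p_nx+p_n$, and a translation conjugates an affine map to its linear part only when $\gcd(p_n-1,q_n)=1$ --- exactly what fails in the borderline situations part $(2)$ must resolve. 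Moreover, the wrap-around is not a defect at ``one or two extreme indices'': depending on the sign of $\alpha-p_n/q_n$ (Lemma~\ref{lemma:even}), the point of index $q_n$ lands at the opposite end of $[0,1)$ from its model value, which shifts \emph{every} rank by one. As the paper's Lemma~\ref{lem:approx} records, in one parity the true permutation is $\pi(k)\equiv p_nk \pmod{q_n}$ (your linear map, where your involution analysis for part $(1)$ is correct), but in the other parity it is the affine map $\pi(k)\equiv p_nk+1 \pmod{q_n}$ for all $k$, and this is precisely the order-$4$ case $p_n^2\equiv -1$.

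Because of this, your counting in part $(2)$ analyzes the wrong permutation. Concretely, take $\alpha=\{\varphi\}$ and the convergent $p_n/q_n=21/34$, so $p_n^2=441\equiv -1\pmod{34}$. Multiplication by $21$ modulo $34$ has two fixed points (the residues $0$ and $17$, i.e.\ $k=34$ and $k=17$) and no $2$-cycle, matching your prediction via $\gcd(p_n-1,q_n)=2$ and ``$2k\equiv 0$''. The actual permutation $k\mapsto 21k+1 \pmod{34}$ has no fixed point at all (since $20k\equiv -1 \pmod{34}$ is unsolvable) and instead the single $2$-cycle $(11\ 28)$, coming from the correct short-cycle condition $2k\equiv p_n+1 \pmod{q_n}$. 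Both outcomes happen to fall inside the statement's dichotomy, which masks the error, but your argument cannot decide which alternative occurs, and your criterion for $2$-cycles is false for the true permutation. The repair is the paper's computation: keep the affine formula, note $\pi^4(k)\equiv p_n^4k+p_n^3+p_n^2+p_n+1\equiv k \pmod{q_n}$ because $p_n^3+p_n^2+p_n+1=(p_n+1)(p_n^2+1)\equiv 0$, and then solve $\pi^2(k)=k$, i.e.\ $2k\equiv p_n+1 \pmod{q_n}$, whose at most two solutions $(p_n+1)/2$ and $(p_n+1+q_n)/2$ are the only elements available for fixed points or a single $2$-cycle.
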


We point out here that the $0$ from the continued fraction expansion is not included in the palindrome. This allows us to consider any $\alpha$ corresponding to a quadratic irrational modulo 1. Indeed, it is known from results of Lagrange~\cite{Lagrange} and Euler~\cite{Euler} that quadratic irrationals are exactly the irrationals whose continued fraction expansion can be written $[a_0,\overline{a_1,\ldots,a_n}]$, where $a_1,\ldots,a_n$ repeats periodically and is a palindrome. Removing $a_0$ gives the palindrome structure, directly implying Theorem~\ref{th:quad}. Since we are only considering fractional parts modulo 1, we will shorten $[0,a_1,\ldots,a_k]$ to $[a_1,\ldots,a_k]$ without loss of generality. Continued fraction expansions such as $[\overline{r}]$ should be understood as $[a_0,\overline{r}].$\\

Furthermore, we are able to provide a stronger result for irrationals whose partial quotients are constant, by also describing the fixed points of the permutation.

\begin{lemma}\label{th:fixed}
    Let $\alpha$ be an irrational with continued fraction expansion $[\overline{r}]$ and $n$ be even. Let $p_n/q_n$ be the $n$-th convergent of $\alpha$, with $p_0=0$, $p_1=q_0=1$ and $q_1=r$.
    \begin{itemize}
        \item[1.]If $n/2$ is odd, some of the fixed points are the multiples of $q_{n/2}$ in $\{1,\ldots,q_n\}$.
        \item[2.] If it is even, some of them are instead either the multiples in $\{1,\ldots,q_n\}$ of $\frac{r}{2}q_{n/2}+q_{{n/2}+1}$ when $r$ is even or those of $q_{n/2-1}+q_{n/2+1}$ for r odd.
    \end{itemize}
\end{lemma}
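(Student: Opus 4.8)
The plan is to show that in this regime the permutation is nothing more than multiplication by $p_n$ modulo $q_n$, which turns the hunt for fixed points into a divisibility condition. Since $\alpha$ is within $|\delta_n|:=|q_n\alpha-p_n|<1/q_{n+1}$ of $p_n/q_n$, the point $\{k\alpha\}$ sits within $k|\delta_n|/q_n\le|\delta_n|<1/q_n$ of the rational point $\{kp_n/q_n\}=(kp_n\bmod q_n)/q_n$. These $q_n$ rational points are exactly the equally spaced values $0,1/q_n,\dots,(q_n-1)/q_n$, so I would first check that a displacement smaller than the spacing cannot interchange neighbours: the displacement of $k$ is $k\delta_n/q_n$, and two base-adjacent points $k,k'$ keep their order because $|k-k'|\,|\delta_n|<q_n|\delta_n|<1$. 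The only exception is the point at $0$ (namely $k=q_n$), which is carried across the origin in the direction dictated by $\sign\delta_n$ via Lemma~\ref{lemma:even}. Hence the rank of $\{k\alpha\}$ equals $kp_n\bmod q_n$ (taken in $\{1,\dots,q_n\}$); that is, $\pi_{q_n}(k)\equiv kp_n\pmod{q_n}$, so $k$ is a fixed point iff $q_n\mid k(p_n-1)$. It therefore suffices to exhibit one modulus $M$ with $q_n\mid M(p_n-1)$, since then every multiple of $M$ up to $q_n$ is fixed.

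Next I would feed the continued fraction identities into this congruence. Writing $m=n/2$ and $T=\left(\begin{smallmatrix}r&1\\1&0\end{smallmatrix}\right)$, the matrix of convergents is the power $T^{\,\cdot}$ appearing in the proof of Lemma~\ref{lemma:palindrome}. Factoring $T^{\,n}=T^{m}T^{m+1}$ and reading off entries through Lemma~\ref{lemma:recursive} gives the doubling formulas $q_n=q_m(q_{m-1}+q_{m+1})$ and $p_n=q_m^2+q_{m-1}^2$. Taking the determinant of $T^{m}$ (the same computation behind Lemma~\ref{lemma:altern}) yields the single Pell-type relation $q_m^2-rq_mq_{m-1}-q_{m-1}^2=(-1)^m$, which drives everything that follows.

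The core is then a two-line reduction of this relation. Modulo $d:=q_{m-1}+q_{m+1}=rq_m+2q_{m-1}$ one has $rq_m\equiv-2q_{m-1}$, collapsing it to $q_m^2+q_{m-1}^2\equiv(-1)^m\pmod d$, i.e.\ $p_n\equiv(-1)^m\pmod d$; reducing instead modulo $q_m$ gives $q_{m-1}^2\equiv(-1)^{m+1}\pmod{q_m}$. The sign $(-1)^m$ now separates the two alternatives of the statement. In the case $p_n\equiv1\pmod d$ we get $d\mid(p_n-1)$, and since $q_n=q_m\,d$ this gives $q_n\mid q_m(p_n-1)$: the multiples of $M=q_m=q_{n/2}$ are fixed. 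In the other case $q_{m-1}^2\equiv1\pmod{q_m}$ gives $q_m\mid(p_n-1)$, so $q_n=q_m\,d\mid d(p_n-1)$ and the multiples of $M=d=q_{n/2-1}+q_{n/2+1}$ are fixed; when $r$ is even, $d$ is even and a short parity check on $q_m,q_{m-1}$ (here $q_m$ is even and $q_{m-1}^2-1$ is divisible by $8$) shows $2q_m\mid(p_n-1)$, so one may take the smaller modulus $M=\tfrac12 d=\tfrac r2 q_{n/2}+q_{n/2-1}$. This reproduces the three stated forms of $M$.

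The hard part is the first step. The content is not the easy estimate $\{k\alpha\}\approx\{kp_n/q_n\}$ but the claim that the two \emph{orderings} coincide: one must rule out every adjacent transposition simultaneously and pin down the unique wrap-around point, which rests squarely on the quantitative separation $|\delta_n|<1/q_{n+1}$ together with the sign rule of Lemma~\ref{lemma:even}. This is exactly the input that also forces $p_n^2\equiv(-1)^{n-1}\pmod{q_n}$ and hence underlies the involution-versus-$4$-cycle dichotomy of Proposition~\ref{th:cycles}. Once the modular description $\pi_{q_n}(k)\equiv kp_n\pmod{q_n}$ is secured, the remainder is the elementary continued-fraction bookkeeping above, with the parities of $n/2$ and of $r$ selecting among the three moduli.
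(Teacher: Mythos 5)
Your proposal follows the same skeleton as the paper's proof: first secure the modular description $\pi_{q_n}(k)\equiv p_nk \pmod{q_n}$ (this is the paper's Lemma~\ref{lem:approx}, which you re-derive with the bound $|q_n\alpha-p_n|<1/q_{n+1}$ in place of Hurwitz's inequality --- that works, and your rescaled ordering argument is fine), then reduce fixed points to the divisibility $q_n\mid M(p_n-1)$ and exhibit $M$ via continued-fraction identities. Where you genuinely differ is in how the identities are obtained. The paper telescopes the recurrence $q_j=rq_{j-1}+q_{j-2}$ step by step, with a separate telescoping in each parity case; you instead factor the convergent matrix, extract the doubling formulas, and feed the single Pell-type relation $q_m^2-rq_mq_{m-1}-q_{m-1}^2=(-1)^m$ into reductions modulo $d:=q_{m-1}+q_{m+1}=rq_m+2q_{m-1}$ and modulo $q_m$. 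This is tidier and more explanatory: it yields $p_n\equiv(-1)^m\pmod d$ and $p_n\equiv(-1)^{m+1}\pmod{q_m}$ simultaneously, so the parity of the half-index visibly \emph{selects} which modulus works, something the paper's computations only reveal a posteriori.

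Two concrete issues. First, an off-by-one: with your $m=n/2$ and $n$ even, $T^n=(T^m)^2$, not $T^mT^{m+1}$; the formulas you actually use, $q_n=q_m(q_{m-1}+q_{m+1})=q_md$ and $p_n=q_m^2+q_{m-1}^2$, are the entries of $T^{2m+1}$ and hold for the \emph{odd} index $n=2m+1$. This also flips your parity pairing relative to the statement (you conclude ``$m$ even $\Rightarrow$ multiples of $q_m$'' where the lemma says ``$n/2$ odd''). You have inherited an inconsistency of the paper itself: its Pell example indexes $q_1=1,q_2=2,\ldots,q_6=70$, contradicting its own seeds $q_0=1$, $q_1=r$, and its telescoped identity $q_{n/2}p_n-q_{n/2-1}q_n=q_{n/2}$ fails at face value (Fibonacci, $n=6$: $3\cdot 8-2\cdot 13=-2$) while holding in the shifted reading ($2\cdot 5-1\cdot 8=2=q_2$). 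Under the indexing the paper actually uses in its examples, your identities and parity assignments are exactly right, so the fix is simply to state and prove the result for $q_{2m+1}$ outright rather than for ``even $n$'' with the stated seeds. Second, a small real gap in the $r$-even halving: to take $M=\tfrac12 d=\tfrac r2 q_m+q_{m-1}$ you need $2q_m\mid p_n-1$, and ``$q_m$ even plus $8\mid q_{m-1}^2-1$'' does not close the subcase $8\mid q_m$. The Pell relation closes it in one line: for $m$ odd it gives $q_{m-1}^2-1=q_m(q_m-rq_{m-1})$, hence $p_n-1=q_m(2q_m-rq_{m-1})$, and the second factor is even because $r$ is. (The paper instead verifies directly that $\tfrac r2 q_{n/2}+q_{n/2-1}$ is a fixed point; note its statement writes $q_{n/2+1}$ where its proof, and your $d/2$, have $q_{n/2-1}$.)
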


We note that these appear to be the only fixed points for all candidate irrationals whose continued fraction expansion is $[\overline{r}]$, such as $\{\varphi\},\{\sqrt{2}\},\{\sqrt{10}\},\{\sqrt{17}\}$ or $\{\sqrt{26}\}$. We believe these fixed points to be the only ones, but we have not been able to prove that there are no others.

To illustrate Lemma~\ref{th:fixed}, consider $\sqrt{2}$, whose continued fraction expansion is $[1;\overline{2}]$. The $q_n$ are given by the Pell numbers: $1,2,5,12,29,70,\ldots$. For the permutation $\pi_{q_6,\{\sqrt{2}\}}$ for $q_6=70$, the fixed points are exactly the multiples of 5, which corresponds to $q_3$. For $\pi_{q_4,,\{\sqrt{2}\}}$, the fixed points are the multiples of 3, which is precisely $\frac{2}{2}q_2+q_1$.

\section{Proofs}

\subsection{Outline.} We begin by proving Proposition ~\ref{th:cycles}. The first argument is to approximate our point set by another which has the same permutation, but uses a rational quotient $p_n/q_n$ rather than an irrational $\alpha$ to define the Kronecker set. We then show that for this set defined by the rational quotient, the permutation $\pi$ has a `simple' expression. This expression is used to show that $\pi \circ \pi$ (i.e. $\pi^2$) is the identity permutation for $n$ even, and that $\pi^4$ is the identity for $n$ odd. In this last case, we can also characterize which elements are not in a cycle of length four, concluding the proofs of Proposition~\ref{th:cycles} and Theorem~\ref{th:quad}.
We then describe ``large'' families of fixed points in the $n$ even case, before concluding with the proof of Theorems~\ref{th:fibo}, which only requires one extra element compared to the previous results: there cannot be two fixed points and cycles of length four at the same time.

\subsection{An exchange Lemma.}
We begin by introducing an approximation of our point set. For any irrational $\alpha$, instead of considering the Kronecker sequence $(\{k\alpha\})_{k \in \mathbb{N}}$, we will consider an approximation based on its partial quotients $p_n/q_n$. For the permutation induced by the first $q_n$ points, we replace $\alpha$ by $p_n/q_n$, with the following modification for the last point: $\pi_{q_n}(q_n)=q_n$ if $n$ is even and 1 if odd. The idea is illustrated in Figure~\ref{plots}: the last point of $(\{kp_n/q_n\})_{k=1}^{q_n}$ is always equal to 0 modulo 1, while the last one of  $(\{k\alpha\})_{k=1}^{q_n}$ can be either close to 0 from below or from above modulo 1.

\begin{figure}[h]
    \centering
    \includegraphics[width=0.45\linewidth]{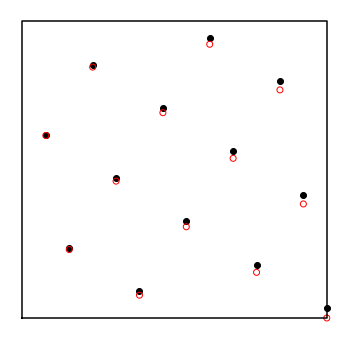}
    \hfill
    \includegraphics[width=0.45\linewidth]{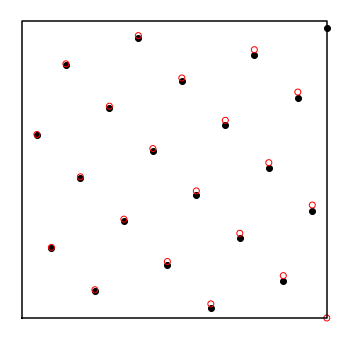}
    \caption{In empty red circles, the points of $(\{kp_n/q_n\})_{k=1}^{q_n}$ for $p_n/q_n=8/13$ (left) and $p_n/q_n=13/21$ (right). In black, the points of $(\{k\alpha\})_{k=1}^{q_n}$. Both permutations are extremely similar, with identical relative positions for the first $q_n-1$ points. The only difference is the final point for $n$ even ($q_n=21$ is the eighth Fibonacci number).}
    \label{plots}
\end{figure}

\begin{lemma}\label{lem:approx}
    The permutations induced by $(k\alpha \mod 1)_{k=1}^{q_n-1}$ and $(kp_n/q_n \mod 1)_{k=1}^{q_n-1}$ are identical. Moreover, $\pi_{q_n}$ is identical to the permutation associated with $(k\alpha\mod 1)_{k=1}^{q_n}$.
\end{lemma}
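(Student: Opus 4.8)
The plan is to regard the rational point set $(\{kp_n/q_n\})_{k=1}^{q_n-1}$ as a perfectly equispaced grid and the irrational set $(\{k\alpha\})_{k=1}^{q_n-1}$ as a small perturbation of it whose size is controlled by Hurwitz's inequality. First I would observe that, because $\gcd(p_n,q_n)=1$, the map $k\mapsto kp_n\bmod q_n$ is a bijection of $\{1,\ldots,q_n-1\}$ onto itself; hence the numbers $\{kp_n/q_n\}$ for $1\le k\le q_n-1$ are exactly the distinct multiples $1/q_n,2/q_n,\ldots,(q_n-1)/q_n$ listed in some order. In particular every such point lies at distance at least $1/q_n$ from $0$ and from $1$, and any two of them differ by at least $1/q_n$.

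Next I would bound the perturbation. For $1\le k\le q_n-1$, Lemma~\ref{lemma:Hurwitz} gives $|k\alpha-kp_n/q_n|=k\,|\alpha-p_n/q_n|\le (q_n-1)/(\sqrt5\,q_n^2)<1/(\sqrt5\,q_n)<1/(2q_n)$. Since this displacement is smaller than $1/q_n$, adding it to a grid value in $[1/q_n,(q_n-1)/q_n]$ never crosses an integer, so $\{k\alpha\}=\{kp_n/q_n\}+k(\alpha-p_n/q_n)$. Thus each irrational point stays strictly inside the open interval of radius $1/(2q_n)$ centred at its grid location, and these intervals are pairwise disjoint. Consequently the order of the $q_n-1$ irrational points coincides with the order of the grid, and since the induced permutation is determined entirely by these pairwise comparisons, the two permutations on the first $q_n-1$ points are identical.

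For the ``moreover'' statement the only new point is $k=q_n$, where $\{q_np_n/q_n\}=0$ sits on the boundary. Here I would invoke Lemma~\ref{lemma:even}: the sign of $\alpha-p_n/q_n$ is fixed by the parity of $n$, so $q_n\alpha=p_n+q_n(\alpha-p_n/q_n)$ lies just above the integer $p_n$ when $\alpha>p_n/q_n$ and just below it otherwise. Hence $\{q_n\alpha\}$ is either smaller than every other point or larger than every other point; landing at an extreme of the order, it leaves the relative ranking of the first $q_n-1$ points untouched while occupying exactly the rank the definition of $\pi_{q_n}$ assigns to the last point. This yields agreement of $\pi_{q_n}$ with the permutation of $(\{k\alpha\})_{k=1}^{q_n}$ on all $q_n$ indices.

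The delicate step is making the numerology close: the displacement of each point must come out strictly below the grid spacing $1/q_n$. This is exactly what the factor $\sqrt5$ in Hurwitz's bound buys, since $1/\sqrt5<1/2$ forces each displacement below $1/(2q_n)$ and hence each point into its own cell. The weaker convergent estimate $|\alpha-p_n/q_n|<1/q_n^2$ would only bound the displacement by (just under) $1/q_n$, which could let a perturbed point reach a neighbouring grid cell and leave the order comparisons undecided; some sharpening of the trivial bound is therefore genuinely needed. The same strict inequality also guarantees that no fractional part wraps around $0$ or $1$.
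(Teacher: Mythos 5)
Your proposal is correct and takes essentially the same route as the paper's own proof: Hurwitz's bound (Lemma~\ref{lemma:Hurwitz}) forces each point's displacement below half the grid spacing $1/q_n$ of the equispaced set $\left(\{kp_n/q_n\}\right)_{k=1}^{q_n-1}$, preserving the order of the first $q_n-1$ points, and Lemma~\ref{lemma:even} places $\{q_n\alpha\}$ at an extreme of the order, matching the modified value of $\pi_{q_n}(q_n)$. If anything you are slightly more careful than the paper (the explicit non-wrapping of fractional parts and the bijectivity of $k\mapsto kp_n \bmod q_n$); just make sure the parity pairing in your last step (which sign of $\alpha - p_n/q_n$, hence which extreme, goes with which value of $\pi_{q_n}(q_n)$) is aligned with the paper's indexing convention for $q_n$, since the paper states the opposite assignment of even/odd to maximal/minimal from the one your computation yields under the convention of Lemma~\ref{lemma:even} as quoted.
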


\begin{proof}
    The argument relies on Lemma~\ref{lemma:Hurwitz}.
    With our approximation, the $k$-th point is shifted vertically by at most 
    $$\left|k\alpha -k\frac{p_n}{q_n}\right|\leq q_n\left|\alpha-\frac{pn}{q_n}\right|.$$
    Let $\varepsilon$ be this maximal vertical movement,
    $$\varepsilon:=q_n\left|\alpha -\frac{p_n}{q_n}\right|.$$
    
    Since every $j/q_n$ is obtained exactly once as $p_n$ and $q_n$ are coprime, a change in the permutation can only happen if $\varepsilon$ is at least half the vertical distance between two consecutive $j/q_n$ and $(j+1)/q_n$. Therefore, we cannot have a change in the permutation unless $\varepsilon>1/(2q_n)$.
    Using Lemma~\ref{lemma:Hurwitz}, we observe that $\varepsilon \leq 1/(\sqrt{5}q_n)<1/(2q_n)$. The first $q_n-1$ elements therefore keep the same relative position.
    It remains to tackle the issue of the last point. In our setting, Lemma~\ref{lemma:even} corresponds to $\{q_n\alpha\}$ being the maximal value of the first $q_n$ terms if $n$ is even, and the minimal value otherwise. Setting $\pi_{q_n}(q_n)=q_n$ if $n$ is even and 1 if odd makes both permutations equal and concludes the proof.
\end{proof}

Using this Lemma, we will from now on only consider the permutation induced by $(kp_n/q_n\mod 1)_{k=1}^{q_n}$. Note that for $\alpha=\varphi$, $p_n/q_n=F_{n-1}/F_n$, where $F_n$ is the $n-$th Fibonacci number. Given the modification made for the last point, when $n$ is even, for any $k \in \{1,\ldots,q_{n}-1\}, \pi_{q_n}(k)$ is given by $p_{n}k \mod q_n$ and $\pi_{q_n}(q_n)=q_n$. For $n$ odd, $\pi_{q_n}(k)$ is given by $p_{n}k+1 \mod q_n$ for $k<q_n$ and 1 otherwise. 

\subsection{Proof of Proposition~\ref{th:cycles}} 

\begin{proof}
Let $\alpha$ be an irrational such that the first $n$ terms of its continued fraction expansion $[a_1,\ldots,a_n]$ form a palindrome, and abbreviate $\pi_{q_n}$ by $\pi$. Lemma~\ref{lemma:palindrome} states that $p_n=q_{n-1}$ if and only if $[a_1,\ldots,a_n]$ is a palindrome. Combining this with Lemma~\ref{lemma:altern} gives $p_n^2 =(-1)^n \mod q_n$.

\begin{itemize}
    \item We first show that for $n$ even $\pi^2=\mbox{Id}$, the identity permutation. If $n$ is even, for $k<q_n$, $\pi(k)$ is equivalent to $p_nk \mod q_n$, as stated above. We therefore have that $\pi^2(k)=\pi(p_nk)\equiv p_n^2k \mod q_n$. Since $p_n^2\equiv 1 \mod q_n$, we have that $\pi^2(k)\equiv k\mod q_n$ and, since both are between $1$ and $q_n$, $\pi^2(k)=k$. By definition, we already know that $\pi(q_n)=q_n$, it is a fixed point. We obtain the desired result, $\pi$ can therefore only contain cycles of length two and fixed points.
    \item Similarly, for $n$ odd, we show that $\pi^4=\mbox{Id}$. If $n$ is odd, for $k \leq q_n$, $\pi(k) \equiv p_nk+1 \mod q_n$. Applying this four times gives us $\pi^4(k) \equiv p_n^4k+p_n^3+p_n^2+p_n+1 \mod q_n$. Since $p_n^2 \equiv -1\mod q_n$, the last four terms cancel. Using $p_n^4\equiv 1 \mod q_n$, we are left with, for all $k \leq q_n$, $\pi(k)=k$. We have that $\pi^4=\mbox{Id}$, $\pi$ can be decomposed into cycles whose length divides four.
\end{itemize}
It now remains to show that there can be only a single cycle of length 2 or less in the $n$ odd case. An element $1\leq k\leq q_n$ is part of a cycle of length 2 or less if and only if $p_n^2k+p_n+1\equiv k[q_n]$ (i.e. $\pi^2(k)=k$). Since $p_n^2 \equiv -1\mod q_n$, this is equivalent to $p_{n}+1 \equiv 2k\mod q_n$. Since $k$ is less than $q_n$, this equation has at most two solutions, equal to $(p_{n}+1)/2$ and $(p_n+1+q_n)/2$. Only these may be either fixed points or part of a cycle of length 2, this concludes the proof.
\end{proof}
Theorem~\ref{th:quad} is an immediate consequence of this Proposition, as quadratic irrationals $\alpha=[0;\overline{a_1,\ldots,a_k}]$ have a palindromic expansion (ignoring the first 0) for the $k-1$-th partial quotient, and then whenever adding another $k$ terms: $[0,a_1,\ldots,a_{k-1}]$, then $[0,a_1,\ldots a_{2k-1}]$ and so on.

\subsection{Proof of Lemma~\ref{th:fixed}}
We now turn our attention to the characterization of the fixed points of the permutation. We consider an irrational $\alpha \in [0,1]$ such that its continued fraction expansion is $[0;\overline{r}]$. Ignoring the first 0, for any $n$ we have $p_n=q_{n-1}$ as the associated continued fraction expansion is a sequence of $r$'s and therefore a palindrome. We can furthermore define $p_n$ and $q_n$ recursively with the following equations given in Lemma~\ref{lemma:recursive}
$$p_n=rp_{n-1}+p_{n-2}~\mbox{and}~q_n=rq_{n-1}+q_{n-2},$$
with $p_{0}=0$, $p_{1}=q_{0}=1$ and $q_1=r$ since $a_0=0$ for $\alpha \in [0,1)$ (the first terms are adapted to our setting where $a_0=0$).

\begin{proof}
In all the cases we consider, the argument relies on the characterization of $\pi$ for $n$ even: $\pi(k)\equiv p_nk \mod q_n$ for all $k\in \{1,\ldots,q_n\}$. We then find one specific fixed point and then obtain as a consequence that the multiples of this element in $\{1,\ldots,q_n\}$ are also fixed points.
\begin{enumerate}
\item    We begin with the odd $n/2$ case, where we show that the fixed points are the multiples of $q_{n/2}$ regardless of the choice of $r$. As stated previously as a consequence of Lemma~\ref{lem:approx}, we have that $\pi(k) \equiv p_nk \mod q_n$. We therefore have that $\pi(p_{n/2})\equiv p_{n/2}p_{n} \mod q_n$. We wish to show that $q_{n/2}p_{n}-q_{n/2-1}q_{n}=q_{n/2}$, which will imply $q_{n/2}p_{n} \equiv q_{n/2} \mod q_n$ and give the desired fixed points.
\begin{align*}
    q_{n/2}p_{n}-q_{n/2-1}q_{n}&=q_{n/2}q_{n-1}-q_{n/2-1}q_{n}\\
    &=q_{n-1}(rq_{n/2-1}+q_{n/2-2})-q_{n/2-1}(rq_{n-1}+q_{n-2})\\
    &=q_{n-1}q_{n/2-2}-q_{n/2-1}q_{n-2}\\
    &=q_{n-3}q_{n/2-2}-q_{n/2-3}q_{n-2}
\end{align*}

Since $n/2$ is odd, we can repeat these steps $(n-1)/2$ times to obtain,
$$q_{n/2}p_{n}-q_{n/2-1}q_{n}=q_1q_{n/2}-q_0q_{n/2+1}=q_{n/2}.$$
This implies that all multiples of $q_{n/2}$ in $\{1,\ldots,q_n\}$ are fixed points of $\pi$ for $n/2$ odd.
\item We now turn to the even $n/2$ case. We need to distinguish the $r$ odd and even cases, though the reasoning will be similar in both cases. We begin with the $r$ odd case, we wish to show that $q_{n+1}+q_{n-1}$ is a fixed point (and therefore its multiples in $\{1,\ldots, q_n\}$ as well). We have
\begin{align*}
    q_{n/2+1}p_{n}-q_{n/2}q_{n}&=q_{n/2+1}q_{n-1}-q_{n/2}q_{n}\\
    &=q_{n/2-1}q_{n-1}-q_{n/2}q_{n-2}\\
    &=q_{n/2-1}q_{n-3}-q_{n/2-2}q_{n-2}
\end{align*}
After $n/4$ steps like the one above, we obtain $q_{1}q_{n/2-1}-q_0q_{n/2}=q_{n/2-1}$. A similar reasoning gives $q_{n/2-1}p_{n}-q_{n}q_{n/2-2}=q_{n+1}$. Summing both relations together gives
$$\left(q_{n/2+1}+q_{n/2-1}\right)p_n \equiv q_{n/2+1}+q_{n/2-1} \mod q_n.$$
Using the characterization of $\pi$, $\pi(k)\equiv p_nk \mod q_n$, we obtain the desired fixed points.
\item Notice that this result is true both for $r$ even and odd. However, in the even case, $rq_{n/2}/2+q_{n/2-1}$ is also a fixed point.
Indeed, with the same method as previously $q_{n/2}p_n-q_{n}q_{n/2-1}=-q_{n}$.
We obtain $$\left(\frac{r}{2}q_{n/2}+q_{n/2-1}\right)p_n\equiv q_{n/2+1}-\frac{r}{2}q_{n/2} \mod q_n.$$
Replacing $q_{n/2+1}$ by $rq_n+q_{n/2-1}$ in the right-hand side of the equation above and using again the characterization of $\pi$ shows that $rq_{n/2}/2+q_{n/2-1}$ is a fixed point. Since $q_{n/2+1}+q_{n/2-1}=2(rq_{n/2}/2+q_{n/2-1})$, the multiples of $rq_{n/2}/2+q_{n/2-1}$ contain all those of $q_{n/2+1}+q_{n/2-1}$. We obtain that in the $r$ even case, the multiples of $rq_{n/2}/2+q_{n/2-1}$ are fixed points.
\end{enumerate}
\end{proof}
We conclude with the proof of Theorem~\ref{th:fibo}, which only requires showing that there is only a single fixed point in the odd $n$ and even $q_n$ case.

\subsection{Proof of Theorem~\ref{th:fibo}}
\begin{proof}
Firstly, the continued fraction expansion of $\{\varphi\}$ is $[0;\overline{1}]$. Using Proposition~\ref{th:cycles}, we already know that:
\begin{enumerate}
    \item For $n$ even, $\pi_{F_n}$ has only cycles of length two and fixed points. Lemma~\ref{th:fixed} further describes (some of) these fixed points.
    \item For $n$ odd, $\pi_{F_n}$ has only cycles of length four and either one cycle of length two or at most two fixed points.
\end{enumerate}
We now consider only this last $n$ odd case, and show that there cannot be two fixed points. This then implies that the parity of $F_n$ enforces either the presence of a single fixed point ($F_n$ odd) or a single cycle of length two ($F_n$ even). Using the relation $F_{n+1}=F_n+F_{n-1}$ and $F_1=F_2=1$, one can easily show by induction that $F_n$ is even if and only if $n$ is a multiple of 3. When $n$ is odd and not divisible by 3, there can only be a single fixed point by parity of $F_n$ (there cannot be three or more by Proposition~\ref{th:cycles}). This covers the cases $n\in\{1,5\} \mod 6$. There remains the case $n\equiv 3 \mod 6$, for which $F_n$ is even. In this case, the decomposition in cycles of length 4 leaves two elements. We now show that they necessarily form a cycle of length 2.

For this, we want to show that $\pi_{q_n}((F_{n-1}+1)/2)=(F_{n-1}+F_{n}+1)/2$, as they are the only two candidates that are not in a cycle of length four, as shown at the end of the proof of Proposition~\ref{th:cycles}. By the characterization of $\pi$ for $n$ odd, this corresponds to showing that $F_{n-1}(F_{n-1}+1)/2+1 \equiv (F_{n-1}+F_{n}+1)/2 \mod F_n$. For this, we use Cassini's identity:

$$(-1)^{n-1}=F_{n-2}F_{n}-F_{n-1}^2. $$
This leads to 
\begin{align*}
\pi\left(\frac{F_{n-1}+1}{2}\right)&=F_{n-1}\frac{F_{n-1}+1}{2}+1 =\frac{F_{n-1}^2+F_{n-1}}{2}+1\\
    &=\frac{F_{n-2}F_n -1+F_{n-1}}{2}+1=\frac{F_{n-2}-1}{2}F_n+\frac{F_{n}+F_{n-1}+1}{2}.
\end{align*}
Since $n\equiv 3 \mod 6$, $F_{n-2}-1$ is even, and we obtain the desired equality modulo $F_n$, showing that $\pi$ has a single cycle of length two and not two fixed points. This concludes the characterization of $\pi_{F_n}$ for any $n$.
\end{proof}

\bibliographystyle{plain}
\bibliography{refs}
\end{document}